\numberwithin{equation}{section}
\newtheorem{theorem}{Theorem}[section]
\newtheorem{lemma}[theorem]{Lemma}
\newtheorem{corollary}[theorem]{Corollary}
\theoremstyle{definition}
\newtheorem{definition}[theorem]{Definition}
\newtheorem{example}[theorem]{Example}
\newtheorem{remark}[theorem]{Remark}
\newtheorem{problem}[theorem]{Problem}
\title{On identity Seidel switches}
\author{Severino V.\ Gervacio\\
Department of Mathematics\\ 
College of Science, De La Salle University\\
2401 Taft Avenue, 0922 Manila, Philippines}
\date{} 
\begin{document}
\maketitle

\begin{abstract}
Seidel switching is a classical operation on graphs which plays a central role in the theory of two-graphs, signed graphs, and switching classes. In this paper we focus on those switches which leave a given graph invariant up to isomorphism. We call such subsets of the vertex set \emph{identity Seidel switches}. After recalling basic properties of Seidel switching and the associated abelian group structure, we introduce Seidel equivalence classes of graphs and then study the structure of the family of identity Seidel switches of a fixed graph. We show that this family forms a $2$--group under composition, and we obtain structural constraints on graphs in which many vertices or edges give rise to identity switches. In particular, we derive necessary conditions in terms of degree parameters, and we characterize certain edge-identity switches via an automorphism of an induced subgraph. Several constructions and examples are presented, and some open problems are proposed.
\end{abstract}

\noindent\textbf{Keywords:} graph, Seidel switching, Seidel equivalence, identity Seidel switch. switching class\\
\noindent\textbf{Mathematics Subject Classification:} [2020] 05C50, 05C60

\section{Introduction}

Seidel switching was introduced by Seidel in his work on two-graphs and strongly regular graphs, and has since become a standard tool in algebraic and structural graph theory. Roughly speaking, given a graph $G$ and a subset $S$ of its vertex set, the Seidel switch with respect to $S$ toggles adjacency between $S$ and its complement while leaving all other adjacencies unchanged. Many important constructions in spectral graph theory, the theory of signed graphs, and the study of Deza and strongly regular graphs can be phrased conveniently in terms of Seidel switching.

Traditionally, one is interested in \emph{switching equivalence}: two graphs $G_1$ and $G_2$ on the same vertex set are Seidel equivalent if one can be obtained from the other by a Seidel switch. This notion underlies Seidel's theory of two-graphs and has strong connections with the eigenvalues of the Seidel matrix, signed permutation similarity, and various graph invariants.

In this paper we focus on a complementary viewpoint: we fix a graph $G$ and look for those switches that leave $G$ invariant up to isomorphism. More precisely, a subset $S\subseteq V(G)$ will be called an \emph{identity Seidel switch} (abbreviated \emph{ISS}) if the Seidel switch of $G$ with respect to $S$ is isomorphic to $G$. The trivial examples are $S=\emptyset$ and $S=V(G)$, but many graphs admit nontrivial identity switches as well. For instance, in the complete bipartite graph $K_{2,3}$ each vertex in the part of size $3$ yields an identity switch.

The main goals of this paper are:
\begin{itemize}[leftmargin=2em]
  \item to develop the basic algebraic framework for Seidel switches by subsets, emphasizing the abelian group structure;
  \item to define and study Seidel equivalence classes of graphs;
  \item to introduce identity Seidel switches and show that they form a $2$--subgroup of the full switching group;
  \item to obtain structural necessary conditions for graphs in which every vertex, or every edge in a certain configuration, yields an identity switch;
  \item to give a characterization of certain edge-identity switches in terms of automorphisms of an induced subgraph, and to present some constructions.
\end{itemize}

The paper is organized as follows. In Section~\ref{sec:prelim} we recall basic terminology and establish fundamental properties of Seidel switching, including commutativity and the description of switching by an arbitrary subset. In Section~\ref{sec:equiv} we introduce Seidel equivalence of graphs and relate the equivalence classes of a graph and its complement. Section~\ref{sec:ISS} is devoted to identity Seidel switches: we define the notion, show that the family of all ISS of a graph forms an abelian group under composition, and derive some degree constraints for graphs in which all vertices define identity switches. In Section~\ref{sec:edge-iss} we focus on edge-identity switches and obtain a necessary and sufficient condition in terms of an automorphism of an induced subgraph. We also describe some constructions, and we finish with remarks and open problems in Section~\ref{sec:conclusion}.

We consider only finite, simple, undirected graphs. For general background in graph theory we refer the reader to Harary~\cite{Harary}.

\section{Preliminaries and Seidel switching}
\label{sec:prelim}

Throughout, a \emph{graph} $G$ is an ordered pair $G=\langle V(G),E(G)\rangle$ where $V(G)$ is a non-empty finite set whose elements are called vertices and $E(G)$ is a set of $2$-element subsets of $V(G)$ called edges. An edge $\{x,y\}\in E(G)$ is also written $xy$ or $yx$. Two vertices $x,y$ are \emph{adjacent} if $xy\in E(G)$. The set of all neighbors of a vertex $x$ is the \emph{neighborhood} of $x$ and is denoted by $N_G(x)$ or simply $N(x)$ when no confusion arises. The \emph{degree} of $x$, denoted $\deg_G(x)$, is $|N_G(x)|$.

For $H\subseteq V(G)$ we denote by $[H]$ the subgraph of $G$ induced by $H$. The complement of $G$ is denoted by $\overline G$.

Two graphs $G_1$ and $G_2$ are \emph{isomorphic} if there is a bijection
$\varphi:V(G_1)\to V(G_2)$ such that $xy\in E(G_1)$ if and only if $\varphi(x)\varphi(y)\in E(G_2)$. We write $G_1\cong G_2$ in this case. An isomorphism from a graph to itself is an \emph{automorphism}. We say that $x,y\in V(G)$ are \emph{similar}, and write $x\sim y$, if there is an automorphism $\varphi$ of $G$ such that $\varphi(x)=y$. The relation $\sim$ is an equivalence relation on $V(G)$.

\subsection{Seidel switching by a vertex}

Let $G$ be a graph and $v\in V(G)$. The \emph{Seidel switch of $G$ by $v$}, denoted $v(G)$, is the graph obtained from $G$ by deleting all edges $vy$ where $y\in N_G(v)$ and adding all edges $vz$ where $z\in V(G)\setminus N_G(v)$ and $z\neq v$. In other words, we complement the adjacency relation between $v$ and the rest of the vertex set, while leaving all other adjacencies unchanged.  It is useful to view $v$ as a function $v: G\mapsto v(G)$.

Figure~\ref{fig:vertex-switch} gives a schematic illustration.

\begin{figure}[h]
\rem
    \centering
    \begin{tikzpicture}[scale=1.0]
        \node[vertex,label=above:$v$] (v) at (0,0) {};
        \node[vertex,label=below left:$x$] (x) at (-1,-1) {};
        \node[vertex,label=below right:$y$] (y) at (1,-1) {};
        \node[vertex,label=above right:$z$] (z) at (1,1) {};
        \draw[edge] (v) -- (x);
        \draw[edge] (v) -- (y);
        \draw[edge] (x) -- (y);
        \draw[edge] (y) -- (z);
        \node at (0,-2.0) {$G$};

        \draw[->,thick] (2,-0.5) -- (4,-0.5) node[midway,above]{switch at $v$};

        \node[vertex,label=above:$v$] (v2) at (6,0) {};
        \node[vertex,label=below left:$x$] (x2) at (5,-1) {};
        \node[vertex,label=below right:$y$] (y2) at (7,-1) {};
        \node[vertex,label=above right:$z$] (z2) at (7,1) {};
        \draw[edge] (v2) -- (z2);
        \draw[edge] (x2) -- (y2);
        \draw[edge] (y2) -- (z2);
        \node at (6,-2.0) {$v(G)$};
    \end{tikzpicture}
    \mer
    $$\pic
    \SetUnits[cm] (0.8,0.8,0.8)
    \SolidVertex{gray}
   \Path(1,1) (0,0) (2,0) (2,2)
   \Edge(1,1) (2,0)
   \Align[c] ($v$) (1,1.4)
   \Align[c] ($x$) (-0.3,-0.3)
   \Align[c] ($y$) (2.3,-0.3)
   \Align[c] ($z$) (2.3,2.3)
   \Align[c] ($G$)  (1,-1)
   \Translate(0,0) (5,0)
   \Align[c] ($v$) (1,1.4)
   \Align[c] ($x$) (-0.3,-0.3)
   \Align[c] ($y$) (2.3,-0.3)
   \Align[c] ($z$) (2.3,2.2)
   \Path(1,1) (2,2) (2,0) (0,0)
   \Align[c] ($v(G)$) (1,-1)
   \cip$$
   \caption{Seidel switching at a vertex $v$}
   \label{fig:vertex-switch}
\end{figure}
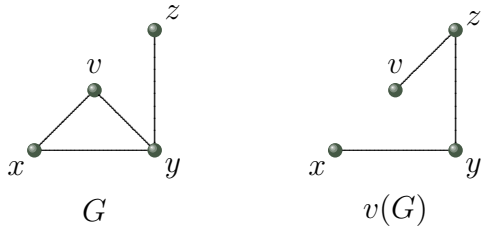

The next theorem shows that Seidel switching respects similarity of vertices.

\begin{theorem}\label{thm:similar-vertices}
Let $G$ be a graph and $u,v\in V(G)$. If $u\sim v$, then $u(G)\cong v(G)$.
\end{theorem}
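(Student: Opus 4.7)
The plan is to show that any automorphism of $G$ that carries $u$ to $v$ also serves, without modification, as an isomorphism between the switched graphs $u(G)$ and $v(G)$. So I would fix $\varphi \in \operatorname{Aut}(G)$ with $\varphi(u)=v$ (which exists by hypothesis $u\sim v$) and then verify that $\varphi$, regarded as a bijection $V(u(G))\to V(v(G))$, preserves and reflects adjacency with respect to the new edge sets.

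The verification splits naturally into two cases according to whether an edge is incident to the switched vertex. For an edge $xy$ with $x,y\neq u$, adjacency in $u(G)$ coincides with adjacency in $G$, and since $\varphi(x),\varphi(y)\neq v$, adjacency of $\varphi(x)\varphi(y)$ in $v(G)$ coincides with adjacency in $G$; the statement then reduces to the fact that $\varphi$ is an automorphism of $G$. For an edge of the form $uy$ (with $y\neq u$), I would use the definition of Seidel switching: $uy\in E(u(G))$ iff $uy\notin E(G)$, i.e.\ iff $y\notin N_G(u)$. Now I would invoke the elementary but crucial fact that an automorphism sending $u$ to $v$ maps $N_G(u)$ bijectively onto $N_G(v)$; hence $y\notin N_G(u)$ iff $\varphi(y)\notin N_G(v)$, which in turn is equivalent to $v\varphi(y)=\varphi(u)\varphi(y)\in E(v(G))$. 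Combining the two cases shows that $\varphi$ is indeed an isomorphism from $u(G)$ to $v(G)$.

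There is no serious obstacle here: the argument is essentially a bookkeeping of which edges get toggled on each side and a single appeal to the fact that automorphisms preserve neighborhoods. The only point requiring care is to ensure that $\varphi(u)=v$ lines up the unique vertex whose incidences are altered on the left with the unique one whose incidences are altered on the right, so that the toggling on the $G$-side and on the $\varphi(G)=G$-side exactly cancel in the comparison between $u(G)$ and $v(G)$.
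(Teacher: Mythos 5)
Your proof is correct, and it takes a cleaner route than the paper's. Both arguments start from an automorphism $\varphi$ of $G$ with $\varphi(u)=v$, but where you use $\varphi$ itself as the isomorphism $u(G)\to v(G)$ --- splitting into the two cases (pairs avoiding $u$, pairs at $u$) and using that $\varphi$ carries $N_G(u)$ onto $N_G(v)$ --- the paper instead defines a modified map $\lambda$ sending $u\mapsto v$, $v\mapsto u$, and agreeing with $\varphi$ on all other vertices. That modification is not only unnecessary, it is actually defective: if $\varphi(v)\neq u$, then the vertex $\varphi^{-1}(u)$ lies outside $\{u,v\}$ and is also sent to $u$, so $\lambda$ fails to be a bijection. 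Concretely, take $G=C_3$ with vertices $1,2,3$, let $\varphi$ be the rotation $1\mapsto 2\mapsto 3\mapsto 1$, and $u=1$, $v=2$; then $\lambda(2)=1=\lambda(3)$. The paper's $\lambda$ coincides with $\varphi$ (and its proof goes through) only in the special case where $\varphi$ happens to interchange $u$ and $v$, which need not occur. Your version avoids this entirely, and it also yields the more general fact, at no extra cost, that any isomorphism $\varphi:G\to G'$ induces an isomorphism from $u(G)$ to $\varphi(u)(G')$ --- the theorem being the case $G'=G$, $\varphi(u)=v$.
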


\begin{proof}
Let $\varphi$ be an automorphism of $G$ such that $\varphi(u)=v$. Define a mapping
\[
\lambda:V(G)\longrightarrow V(G)
\]
by
\[
\lambda(x)=
\begin{cases}
v, & \text{if } x=u,\\
u, & \text{if } x=v,\\
\varphi(x), & \text{otherwise}.
\end{cases}
\]
One checks directly that $\lambda$ is a bijection from $V(u(G))$ to $V(v(G))$ and that $xy$ is an edge of $u(G)$ if and only if $\lambda(x)\lambda(y)$ is an edge of $v(G)$. Thus $\lambda$ is an isomorphism, and $u(G)\cong v(G)$.
\end{proof}

It is natural to ask whether the converse holds; that is, whether $u(G)\cong v(G)$ implies $u\sim v$. This turns out to be false in general; there exist graphs with vertices $u,v$ such that $u(G)\cong v(G)$ but no automorphism of $G$ maps $u$ to $v$. 

\begin{example}
Consider the tadpole $T_{3,4}$ in Figure \ref{fig:Tadpole}.  There is no automorphism of the tadpole that maps $u$ to $v$ and yet the graphs $u(T_{3,4})\cong v(T_{3,4}$.
\end{example}

\begin{figure}[h]
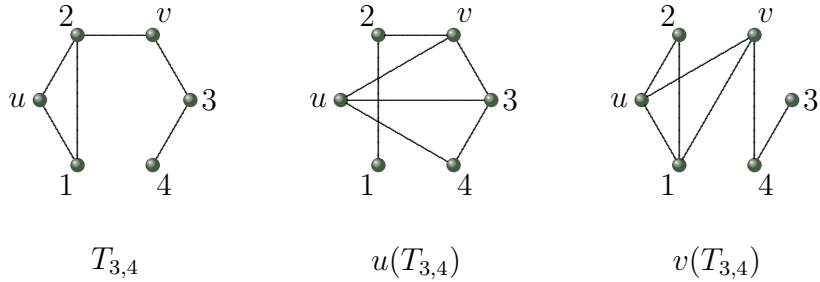
 \label{fig:Tadpole}
$$\pic
\SetUnits[cm] (0.5,0.866,1)
\SolidVertex{gray}
\Path(-1,-1) (-2,0) (-1,1) (1,1) (2,0) (1,-1)
\Edge(-1,-1) (-1,1)
\Align[c] (1) (-1.3,-1.3)
\Align[c] ($u$) (-2.6,0)
\Align[c] (2) (-1.3,1.3)
\Align[c] ($v$) (1.3,1.3)
\Align[c] (3) (2.5,0)
\Align[c] (4) (1.3,-1.3)
\Align[c] ($T_{3,4}$) (0,-2.5)
\Translate(0,0) (8,0)
\Path(1,-1) (2,0) (1,1)
\Path(-1,-1) (-1,1)
\Edge(-1,1) (1,1)
\Vertex(-2,0) 
\Edge(-2,0) (1,1) (-2,0) (2,0) (-2,0) (1,-1)
\Align[c] (1) (-1.3,-1.3)
\Align[c] ($u$) (-2.6,0)
\Align[c] (2) (-1.3,1.3)
\Align[c] ($v$) (1.3,1.3)
\Align[c] (3) (2.5,0)
\Align[c] (4) (1.3,-1.3)
\Align[c] ($u(T_{3,4})$) (0,-2.5)
\Translate(0,0) (16,0)
\CYCLE(-2,0) (-1,1) (-1,-1)
\Path(1,-1) (2,0)
\Vertex(1,1)
\Edge(1,1) (1,-1) (1,1) (-1,-1) (1,1) (-2,0)
\Align[c] (1) (-1.3,-1.3)
\Align[c] ($u$) (-2.6,0)
\Align[c] (2) (-1.3,1.3)
\Align[c] ($v$) (1.3,1.3)
\Align[c] (3) (2.5,0)
\Align[c] (4) (1.3,-1.3)
\Align[c] ($v(T_{3,4})$) (0,-2.5)
\cip$$
\caption{Two switches $u$ and $v$ yielding isomorphic graphs}
\end{figure}

We can describe each of $u(T_{3,4})$ and $v(T_{3,4})$ as a diamond $K_4-e$ with a tail $P_4$ attached to a vertex of degree 2 of the diamond, and hence, they are isomorphic.
\subsection{Switching by subsets and algebraic structure}

We now extend Seidel switching from single vertices to arbitrary subsets of the vertex set.

Let $G$ be a graph and $v_1,\dots,v_k\in V(G)$. Since switching is an operation on graphs, we may define
\[
(v_1v_2\cdots v_k)(G) = v_k\bigl(\cdots v_2(v_1(G))\bigr).
\]
The order in which the switches are applied will turn out not to matter.

\begin{theorem}\label{thm:commute}
For any two vertices $u,v$ of a graph $G$,
\[
(uv)(G)=(vu)(G).
\]
\end{theorem}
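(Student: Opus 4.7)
The plan is to prove the stronger fact that $(uv)(G)$ and $(vu)(G)$ coincide as graphs on the vertex set $V(G)$, not merely up to isomorphism, by comparing their edge sets pair by pair. If $u=v$ the statement is trivial, since switching at the same vertex twice restores $G$, so I would assume $u\neq v$ and partition the $2$-element subsets $\{x,y\}$ of $V(G)$ according to how many of their elements lie in $\{u,v\}$.

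For a pair $\{x,y\}$ with $\{x,y\}\cap\{u,v\}=\emptyset$, neither switch alters the edge $xy$, so its presence in $(uv)(G)$ and in $(vu)(G)$ agrees with its presence in $G$. For a pair $\{u,y\}$ with $y\notin\{u,v\}$, switching at $u$ toggles the edge $uy$ while switching at $v$ leaves it alone (because $u\neq v$ and $y\neq v$); in either ordering the edge is therefore toggled exactly once, producing the complementary status in $G$. The case $\{v,y\}$ with $y\notin\{u,v\}$ is symmetric.

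The only interesting case is the pair $\{u,v\}$ itself. Here switching at $u$ toggles the edge $uv$, and then switching at $v$ in the graph $u(G)$ toggles that same edge a second time, since by definition the second switch complements all adjacencies of $v$ with the remaining vertices. The two togglings cancel, so $uv$ has the same status in $(uv)(G)$ as in $G$; the symmetric argument gives the same status in $(vu)(G)$.

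Combining the four cases, $(uv)(G)$ and $(vu)(G)$ have identical edge sets on $V(G)$, so they are equal graphs. The only subtle point is recognizing, in the case of the pair $\{u,v\}$, that the second switch acts on the already modified graph, and that modifying the edges at $u$ does not disturb $v$'s role as a vertex whose adjacencies with all others (including $u$) are complemented by the second switch; once that is granted the proof reduces to a routine case check. I would expect no genuine obstacle beyond careful bookkeeping.
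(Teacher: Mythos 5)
Your proof is correct and follows essentially the same approach as the paper's: the paper sketches exactly this toggling argument (each edge between $\{u,v\}$ and the rest is toggled once, the edge $uv$ twice, in either order), and you have simply carried out the case check it alludes to. Your version is actually more complete, since you verify equality of edge sets pair by pair and handle the degenerate case $u=v$, but it is the same proof.
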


\begin{proof}
The effect of switching at $u$ and then at $v$ is to toggle the adjacency between each of $u$ and $v$ and the remaining vertices, and to toggle the edge $uv$ twice. The same toggling pattern occurs if we first switch at $v$ and then at $u$. More formally, one can check adjacency of any pair of vertices and see that the resulting graphs coincide.
\end{proof}

Since composition of mappings is associative, we obtain:

\begin{theorem}\label{thm:assoc}
For any three vertices $u,v,w$ in a graph $G$,
\[
\bigl[(uv)w\bigr](G) = \bigl[u(vw)\bigr](G).
\]
\end{theorem}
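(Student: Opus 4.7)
The plan is to exploit the functional viewpoint already introduced in the paper, where each vertex switch is regarded as a map $v:G\mapsto v(G)$ on the set of graphs with vertex set $V(G)$. Under this interpretation, the juxtaposition convention $(v_1v_2\cdots v_k)(G)=v_k(\cdots v_2(v_1(G)))$ is simply iterated application of these maps. Since ordinary composition of maps is associative, the claimed identity should drop out immediately once both sides are unwound; no graph-theoretic reasoning at the level of individual edges is needed.

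Concretely, the first step is to expand the left-hand side by applying the defining convention twice:
\[
\bigl[(uv)w\bigr](G) \;=\; w\bigl((uv)(G)\bigr) \;=\; w\bigl(v(u(G))\bigr).
\]
The second step is to expand the right-hand side analogously, this time peeling off the outer $u$ first and then unwrapping $(vw)$:
\[
\bigl[u(vw)\bigr](G) \;=\; (vw)\bigl(u(G)\bigr) \;=\; w\bigl(v(u(G))\bigr).
\]
Comparing the two final expressions gives equality as actual graphs on $V(G)$, which is stronger than the stated identity.

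The only subtlety — and what I would flag as the sole potential pitfall — is being consistent about the order convention: the string $v_1v_2$ means \emph{first} switch at $v_1$ and \emph{then} at $v_2$, so $(v_1v_2)(G)=v_2(v_1(G))$, which reverses the usual left-to-right composition order familiar from algebra. Once this convention is fixed as in the preceding definition, the proof is purely a matter of unwinding notation. The argument also generalises painlessly to any finite number of vertices, showing that the parenthesisation in a product $v_1v_2\cdots v_k$ is irrelevant, which sets up the group-theoretic discussion later in the paper.
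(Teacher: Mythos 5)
Your proof is correct and takes essentially the same route as the paper: the paper dispatches this theorem with the single remark that composition of mappings is associative, and your explicit unwinding of both sides to the common expression $w\bigl(v(u(G))\bigr)$ is exactly that observation written out in detail (correctly handling the left-to-right application convention). Your stronger conclusion---equality as actual graphs, not merely up to isomorphism---matches what the paper's argument implicitly gives as well.
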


Thus, if $S=\{v_1,\dots,v_k\}$ is a subset of $V(G)$, we may define
\[
S(G) := (v_1v_2\cdots v_k)(G),
\]
and Theorems~\ref{thm:commute} and~\ref{thm:assoc} show that $S(G)$ is well-defined, independent of the order in which the vertices $v_i$ are used.

The following observation is basic.

\begin{theorem}\label{thm:all-vertices}
Let $G$ be any graph and $S=V(G)$. Then $S(G)=G$.
\end{theorem}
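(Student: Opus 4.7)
The plan is to exploit the fact that, by Theorems~\ref{thm:commute} and~\ref{thm:assoc}, the result $S(G)$ does not depend on the order in which the vertices of $S$ are switched, so I may track the effect of the composite operation one pair of vertices at a time. The key observation is that a single vertex switch affects only those pairs having that vertex as an endpoint.

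More precisely, I would fix an arbitrary pair $\{x,y\}$ of distinct vertices of $G$ and compute how many times the adjacency of $x$ and $y$ is toggled as we successively switch at each vertex of $S=V(G)$. Switching at a vertex $w\notin\{x,y\}$ complements the adjacencies from $w$ to the rest, but leaves the adjacency between $x$ and $y$ unchanged. Switching at $x$ toggles the adjacency of the pair $\{x,y\}$ exactly once (since $y\neq x$), and likewise switching at $y$ toggles it exactly once. Hence, during the full sequence of $|V(G)|$ switches, the adjacency of $\{x,y\}$ is toggled exactly twice.

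Since two toggles restore the original state, $xy\in E(S(G))$ if and only if $xy\in E(G)$. As this holds for every pair $\{x,y\}\subseteq V(G)$, we conclude $S(G)=G$.

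The argument is essentially a parity count on each pair, and I do not foresee any real obstacle; the only subtlety is the appeal to well-definedness of $S(G)$, which has already been secured by the commutativity and associativity results, so that it is legitimate to perform the $|V(G)|$ switches in whatever order is convenient for the per-pair analysis.
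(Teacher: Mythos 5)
Your proof is correct and is essentially the same as the paper's: both arguments observe that, across the $|V(G)|$ switches, the adjacency of any fixed pair $\{x,y\}$ is toggled exactly twice (once at $x$ and once at $y$) and is therefore restored. Your version merely spells out the per-pair bookkeeping and the appeal to well-definedness a little more explicitly than the paper does.
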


\begin{proof}
Each edge $xy$ has its adjacency toggled exactly twice, once when we switch at $x$ and once when we switch at $y$. Thus the adjacency between $x$ and $y$ is restored to its original state, and $S(G)=G$.
\end{proof}

The next theorem gives a convenient one-step description of switching by a subset.

\begin{theorem}\label{thm:subset-switch}
Let $G$ be a graph and $S$ a non-empty subset of $V(G)$. Then $S(G)$ is obtained from $G$ by deleting all edges $xy$ with $x\in S$ and $y\in V(G)\setminus S$, and adding all non-edges $xy$ with $x\in S$ and $y\in V(G)\setminus S$. Edges with both ends in $S$ or both ends in $V(G)\setminus S$ remain unchanged.
\end{theorem}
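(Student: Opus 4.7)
The plan is to track, pair by pair, the parity of the number of times each adjacency is toggled as the single-vertex switches at the members of $S$ are applied in some order. By Theorem~\ref{thm:commute} and Theorem~\ref{thm:assoc}, the resulting graph $S(G)$ does not depend on that order, so I may fix any enumeration $S=\{v_1,\ldots,v_k\}$ and process the $v_i$ sequentially.

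First I would recall, from the definition of the single-vertex Seidel switch, that the operation at $v_i$ toggles the adjacency of $\{v_i,w\}$ for every $w\in V(G)\setminus\{v_i\}$ and leaves every other pair unchanged. Consequently, for any pair of distinct vertices $\{x,y\}$, the adjacency of $\{x,y\}$ is toggled exactly once per index $i$ with $v_i\in\{x,y\}$; in total, it is toggled $|\{x,y\}\cap S|$ times. Since each toggle is an involution on the status of that pair, whether $\{x,y\}$ ends up in the same adjacency state as in $G$ or the opposite state is governed solely by the parity of $|\{x,y\}\cap S|$.

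Next I would split on the three possible values of $|\{x,y\}\cap S|\in\{0,1,2\}$. If both $x,y\notin S$ (value $0$) or both $x,y\in S$ (value $2$), the parity is even and the adjacency in $S(G)$ agrees with that in $G$; if exactly one of $x,y$ lies in $S$ (value $1$), the parity is odd and the adjacency in $S(G)$ is the complement of the adjacency in $G$. Assembling these three cases yields precisely the conclusion: edges across the bipartition $(S,V(G)\setminus S)$ become non-edges, non-edges across that bipartition become edges, and pairs lying entirely inside $S$ or entirely inside $V(G)\setminus S$ are left alone.

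The main obstacle is essentially nonexistent, since the argument is just a parity count. I would, however, be careful to invoke Theorems~\ref{thm:commute} and~\ref{thm:assoc} explicitly before speaking about "the" number of toggles of a given pair, because a priori the sequence of intermediate graphs depends on the order in which the vertices of $S$ are processed; commutativity and associativity are what turn this order-dependent count into a well-defined invariant of $S$.
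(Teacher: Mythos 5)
Your proof is correct and rests on the same toggle-counting idea as the paper's: each pair $\{x,y\}$ is toggled exactly once for every endpoint lying in $S$, so its final adjacency status is governed by the parity of $|\{x,y\}\cap S|$. The only organizational difference is that the paper treats $S=V(G)$ as a separate case and routes the within-$S$ pairs through Theorem~\ref{thm:all-vertices} applied to the induced subgraph $[S]$, whereas your uniform split on $|\{x,y\}\cap S|\in\{0,1,2\}$ handles all pairs (and the case $S=V(G)$) at once, which is equally valid and arguably cleaner.
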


\begin{proof}
If $S=V(G)$, the statement follows from Theorem~\ref{thm:all-vertices}. Now assume $S\neq V(G)$. Let $H=[S]$ be the subgraph induced by $S$. By Theorem~\ref{thm:all-vertices} we have $S(H)=H$, so switching at the vertices of $S$ does not change the induced subgraph on $S$.

For each vertex $v\in S$, switching at $v$ toggles adjacency between $v$ and $V(G)\setminus S$ and leaves all other adjacencies fixed. Since each vertex in $S$ is switched exactly once and vertices outside $S$ are not switched, all edges between $S$ and $V(G)\setminus S$ are complemented, while edges within $S$ or within $V(G)\setminus S$ are unchanged.
\end{proof}

The next easy but useful observation relates complementary switches.

\begin{theorem}\label{thm:complement-switch}
Let $G$ be a graph and $S$ a non-empty proper subset of $V(G)$. Set $T=V(G)\setminus S$. Then $S(G)=T(G)$.
\end{theorem}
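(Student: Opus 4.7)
The plan is to read off the claim directly from Theorem~\ref{thm:subset-switch}, which gives a one-step description of a subset switch: $S(G)$ is obtained from $G$ by toggling exactly the pairs $\{x,y\}$ with $x\in S$ and $y\in V(G)\setminus S = T$, leaving all pairs inside $S$ and all pairs inside $T$ unchanged. Applying the same description with $T$ in place of $S$, the graph $T(G)$ is obtained from $G$ by toggling pairs $\{x,y\}$ with $x\in T$ and $y\in V(G)\setminus T = S$. These two collections of pairs coincide (each is just the edge-cut between $S$ and $T$), so the two operations alter the same set of adjacencies of $G$ and therefore $S(G)$ and $T(G)$ are literally the same graph.

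An equivalent but more algebraic proof uses Theorems~\ref{thm:commute}, \ref{thm:assoc}, and~\ref{thm:all-vertices}. Since $V(G)$ is the disjoint union $S\cup T$, commutativity and associativity of single-vertex switches let us write $T(S(G)) = V(G)(G) = G$. Each single-vertex switch is evidently an involution (toggling twice restores every adjacency), and therefore so is each subset switch; applying $S$ to both sides of $T(S(G))=G$, and moving $S$ past $T$ by commutativity, yields $T(G) = S(G)$. There is essentially no obstacle here: the statement is a direct corollary of the cut description in Theorem~\ref{thm:subset-switch}, and the only care needed is to ensure that the subset-switch machinery from the preceding theorems is in place before decomposing $V(G)=S\cup T$.
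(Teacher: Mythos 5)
Your first paragraph is exactly the paper's proof: invoke Theorem~\ref{thm:subset-switch} to see that $S(G)$ and $T(G)$ both toggle precisely the pairs in the cut between $S$ and $T$, hence coincide. The alternative algebraic argument in your second paragraph (via $T(S(G))=V(G)(G)=G$ and involutivity) is also sound but is an extra route the paper does not take; the core of your proposal matches the paper's approach.
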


\begin{proof}
By Theorem~\ref{thm:subset-switch}, both $S(G)$ and $T(G)$ are obtained from $G$ by toggling exactly the edges with one end in $S$ and the other in $T$. Hence they coincide.
\end{proof}

It is natural to define $\emptyset(G)=G$ for every graph $G$, so that the empty set is also regarded as a Seidel switch. With this convention, Theorem~\ref{thm:complement-switch} also holds when $S=\emptyset$ or $S=V(G)$, since then $T=V(G)$ or $T=\emptyset$ and both switches equal the identity.

The composition of switches is well understood in terms of the symmetric difference of subsets.

\begin{theorem}\label{thm:symmetric-difference}
Let $S$ and $T$ be two subsets of $V(G)$. Then
\[
S(T(G)) = (S\Delta T)(G),
\]
where $\Delta$ denotes the symmetric difference of sets.
\end{theorem}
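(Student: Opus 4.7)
The plan is to prove the identity by reducing it to a per-edge parity count, using Theorem~\ref{thm:subset-switch} as the main tool. For any subset $A\subseteq V(G)$, Theorem~\ref{thm:subset-switch} says that passing from $G$ to $A(G)$ flips the adjacency of a pair $\{x,y\}$ precisely when exactly one of $x,y$ lies in $A$; equivalently, writing $\chi_A(x)\in\{0,1\}$ for the indicator of $A$, the edge $xy$ is flipped iff $\chi_A(x)+\chi_A(y)\equiv 1\pmod 2$. This reduces the whole question to $\mathbb{F}_2$-arithmetic on vertex indicators.

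First I would fix an arbitrary pair $\{x,y\}\subseteq V(G)$ and track how many times its adjacency is toggled in the two-step process $G\mapsto T(G)\mapsto S(T(G))$. By the observation above, the first step toggles $xy$ iff $\chi_T(x)+\chi_T(y)\equiv 1\pmod 2$, and the second step toggles $xy$ iff $\chi_S(x)+\chi_S(y)\equiv 1\pmod 2$. Hence the net effect on $xy$ in $S(T(G))$ relative to $G$ is a flip iff
\[
\bigl(\chi_S(x)+\chi_T(x)\bigr)+\bigl(\chi_S(y)+\chi_T(y)\bigr)\equiv 1\pmod 2.
\]
Since $\chi_S(v)+\chi_T(v)\equiv \chi_{S\Delta T}(v)\pmod 2$ for every $v\in V(G)$, this condition is exactly $\chi_{S\Delta T}(x)+\chi_{S\Delta T}(y)\equiv 1\pmod 2$, which by Theorem~\ref{thm:subset-switch} is the condition for $xy$ to be toggled in passing from $G$ to $(S\Delta T)(G)$.

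Since $S(T(G))$ and $(S\Delta T)(G)$ agree on the toggling of every pair of vertices (and agree on non-toggling of every pair), they have the same edge set, and the identity follows. The argument covers the extremal cases too: if $S=\emptyset$ or $T=\emptyset$ then $\chi_S$ or $\chi_T$ vanishes and we recover the convention $\emptyset(G)=G$; if $S=T$ then $S\Delta T=\emptyset$ and we correctly obtain $S(S(G))=G$, consistent with Theorems~\ref{thm:all-vertices} and~\ref{thm:commute}. No step presents a real obstacle; the only delicate point is to state carefully that Theorem~\ref{thm:subset-switch} applies to every $A\subseteq V(G)$, including $A=V(G)$ and $A=\emptyset$, which is why extending the convention $\emptyset(G)=G$ after Theorem~\ref{thm:complement-switch} was useful.
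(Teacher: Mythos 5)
Your proof is correct, but it follows a genuinely different route from the paper's. The paper argues algebraically: it writes $S$ and $T$ as strings of single-vertex switches, invokes commutativity (Theorem~\ref{thm:commute}) to reorder them, and cancels each vertex occurring twice using the involution property $v^2(G)=G$; what survives the cancellation is exactly $S\Delta T$. You instead argue edge-locally: using the one-step description of Theorem~\ref{thm:subset-switch}, you track for each pair $\{x,y\}$ whether its adjacency is toggled, and reduce the claim to the identity $\chi_S+\chi_T\equiv\chi_{S\Delta T}\pmod 2$ on indicator functions. Each approach has its merits. The paper's proof is shorter and directly exhibits the group structure that the subsequent Remark makes explicit (the switching group as $(\mathcal P(V(G)),\Delta)$), but it leans on Theorem~\ref{thm:commute}, whose own proof in the paper is only sketched (``one can check adjacency of any pair''). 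Your argument is independent of commutativity --- indeed it re-derives both commutativity and the involution property as special cases --- and it is rigorous at the level of individual pairs, which is precisely the level at which switching is defined; it also handles the degenerate cases $S=\emptyset$, $S=T$, and $S=V(G)$ uniformly rather than by separate conventions. The one point worth stating with care, which you do flag, is that the toggling criterion of Theorem~\ref{thm:subset-switch} must be read as valid for \emph{all} subsets $A\subseteq V(G)$ (for $A=\emptyset$ or $A=V(G)$ no pair has exactly one endpoint in $A$, consistent with $A(G)=G$), and that it is being applied to the intermediate graph $T(G)$ as well as to $G$, which is legitimate since the theorem holds for an arbitrary graph.
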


\begin{proof}
Write $S=\{u_1,\dots,u_s\}$ and $T=\{v_1,\dots,v_t\}$. Then
\[
S(T(G)) = (u_1\cdots u_s v_1\cdots v_t)(G).
\]
By Theorem~\ref{thm:commute} we may reorder the switches arbitrarily. Whenever $u_i=v_j$, the two switches cancel since $v_i^2(G)=G$. Thus we may delete all vertices occurring twice, and the remaining vertices are exactly those in $S\Delta T$. Hence $S(T(G)) = (S\Delta T)(G)$.
\end{proof}

\begin{remark}
For a fixed graph $G$, the power set $\mathcal P(V(G))$ equipped with the operation
\[
S\ast T := S\Delta T
\]
forms an abelian group of order $2^{|V(G)|}$. Theorem~\ref{thm:symmetric-difference} shows that this group is isomorphic to the group of Seidel switches of $G$ under composition of mappings. In particular, every switch has order $2$, and the identity element is $\emptyset$.
\end{remark}

\section{Seidel equivalence of graphs}
\label{sec:equiv}

We now introduce an equivalence relation on graphs based on Seidel switching.

\begin{definition}
Let $G_1$ and $G_2$ be graphs on the same vertex set. We say that $G_1$ and $G_2$ are \emph{Seidel equivalent}, and write $G_1\sim G_2$, if $S(G_1)\cong G_2$ for some subset $S\subseteq V(G_1)$. The \emph{Seidel equivalence class} of $G$ is
\[
[G] := \{S(G) : S\subseteq V(G)\},
\]
considered up to isomorphism.
\end{definition}

It is straightforward to verify that $\sim$ is an equivalence relation. The equivalence classes partition the set of graphs on a fixed vertex set into \emph{switching classes}. We illustrate the situation for graphs of order $4$.

\begin{example}
Up to isomorphism, the Seidel equivalence classes of graphs of order $4$ are represented by the path $P_4$, the cycle $C_4$, and the complete graph $K_4$. In particular, every graph on four vertices is Seidel equivalent to one of these three graphs.
\end{example}

The following theorem relates the Seidel equivalence classes of a graph and its complement.

\begin{theorem}\label{thm:complement-class}
For every graph $G$, the Seidel equivalence classes $[G]$ and $[\overline G]$ have the same number of elements (up to isomorphism).
\end{theorem}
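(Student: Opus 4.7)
The plan is to produce an explicit bijection between $[G]$ and $[\overline{G}]$ that is induced by complementation of graphs. The crucial fact underlying this bijection is the commutativity identity
\[
\overline{S(G)} = S(\overline{G}) \qquad \text{for every } S\subseteq V(G).
\]

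First I would establish this identity by comparing adjacencies of an arbitrary pair $x,y$ of distinct vertices of $V(G)$ under the two compositions. By Theorem~\ref{thm:subset-switch}, Seidel switching by $S$ toggles the adjacency of $xy$ exactly when $|\{x,y\}\cap S|=1$, whereas complementation toggles every adjacency. Splitting into the two cases, namely $x,y$ on the same side of the partition $V(G)=S\cup(V(G)\setminus S)$ and $x,y$ on opposite sides, one checks that the total number of toggles applied to $xy$ agrees for the two operations $\overline{S(\cdot)}$ and $S(\overline{(\cdot)})$. Hence the two resulting graphs coincide edge-by-edge. I would probably state this as a short preliminary lemma, since it is natural enough to be useful on its own.

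Once the identity is in hand, define
\[
\Phi:[G]\longrightarrow[\overline{G}], \qquad \Phi(S(G))=\overline{S(G)}=S(\overline{G}).
\]
Since complementation preserves graph isomorphism, $\Phi$ is well-defined on isomorphism classes. Applying the same construction with $\overline{G}$ in place of $G$ and using $\overline{\overline{G}}=G$ yields a map $\Psi:[\overline{G}]\to[G]$ sending $T(\overline{G})$ to $T(G)$, and clearly $\Psi\circ\Phi$ and $\Phi\circ\Psi$ are both the identity on isomorphism classes. Therefore $\Phi$ is a bijection and $|[G]|=|[\overline{G}]|$.

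The only real obstacle is the verification of $\overline{S(G)}=S(\overline{G})$; everything after that is formal. The verification itself is essentially a two-case check on pairs of vertices, so no serious calculation is required.
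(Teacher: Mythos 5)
Your proof is correct and takes essentially the same route as the paper: both rest on the identity $\overline{S(G)} = S(\overline G)$ and the induced map $\Phi(S(G)) = S(\overline G)$ on equivalence classes. The only cosmetic differences are that you exhibit an explicit two-sided inverse $\Psi$ where the paper checks injectivity and surjectivity directly, and that you actually verify the key identity by a two-case toggle count, which the paper merely asserts as a consequence of Theorem~\ref{thm:subset-switch}.
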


\begin{proof}
For $S\subseteq V(G)$, Theorem~\ref{thm:subset-switch} shows that switching with respect to $S$ and then taking complements is the same as complementing first and then switching with respect to $S$. More precisely,
\[
\overline{S(G)} = S(\overline G).
\]
Define a map
\[
\Phi : [G] \longrightarrow [\overline G]
\]
by $\Phi(S(G)) := S(\overline G)$. The previous identity shows that $\Phi$ is well-defined. It is clearly surjective: each element of $[\overline G]$ has the form $S(\overline G)$ for some $S$. If $\Phi(S_1(G))\cong \Phi(S_2(G))$, then $S_1(\overline G)\cong S_2(\overline G)$, hence $\overline{S_1(G)}\cong \overline{S_2(G)}$, and thus $S_1(G)\cong S_2(G)$. Therefore $\Phi$ is injective on equivalence classes, and so $[G]$ and $[\overline G]$ have the same cardinality.
\end{proof}

\begin{remark}
It is well known that there is no self-complementary graph of order $n$ when $n\equiv 2,3\pmod 4$. In these cases one can list the isomorphism classes of graphs of order $n$ in complementary pairs $\{G,\overline G\}$, and Theorem~\ref{thm:complement-class} shows that the two graphs in each pair have Seidel equivalence classes of equal size.
\end{remark}

\section{Identity Seidel switches}
\label{sec:ISS}

We now turn to the central notion of the paper.

\begin{definition}
Let $G$ be a graph. A subset $S\subseteq V(G)$ is called an \emph{identity Seidel switch} (abbreviated \emph{ISS}) if
\[
S(G)\cong G.
\]
\end{definition}

We occasionally refer to the collection of all identity Seidel switches of $G$ as the \emph{ISS-family} of $G$.

\begin{remark}
If $S$ is an ISS of $G$, then $V(G)\setminus S$ is also an ISS, since $S(G)=\bigl(V(G)\setminus S\bigr)(G)$ by Theorem~\ref{thm:complement-switch}. The trivial switches $\emptyset$ and $V(G)$ are always ISS for any graph $G$.
\end{remark}

We say that $\{x\}$ is a \emph{vertex-ISS} if $\{x\}$ is an ISS, and that $\{x,y\}$ is an \emph{edge-ISS} if $\{x,y\}$ is an ISS and $xy\in E(G)$.

\begin{example}
In the complete bipartite graph $K_{n, n+1}$, each vertex of degree $n$ is a vertex-ISS. Indeed, switching at such a vertex simply permutes the roles of the two partite sets and yields a graph isomorphic to $K_{n, n+1}$.
\end{example}

Figure~\ref{fig:vertex-iss-ex} illustrates some examples of vertex-ISS.

\begin{figure}[h]
\rem
    \centering
    \begin{tikzpicture}[scale=0.9]
        \node[vertex,label=above:$x$] (x1) at (0,0) {};
        \node[vertex] (a1) at (-1,-1) {};
        \node[vertex] (b1) at (1,-1) {};
        \node[vertex] (c1) at (0,1) {};
        \draw[edge] (x1) -- (a1);
        \draw[edge] (x1) -- (b1);
        \draw[edge] (a1) -- (b1);
        \draw[edge] (x1) -- (c1);
        \node at (0,-2) {$G_1$};

        \node[vertex,label=above:$x$] (x2) at (4,0) {};
        \node[vertex] (a2) at (3,-1) {};
        \node[vertex] (b2) at (5,-1) {};
        \node[vertex] (c2) at (3,1) {};
        \node[vertex] (d2) at (5,1) {};
        \draw[edge] (x2) -- (a2);
        \draw[edge] (x2) -- (b2);
        \draw[edge] (a2) -- (c2);
        \draw[edge] (b2) -- (d2);
        \node at (4,-2) {$G_2$};
    \end{tikzpicture}
    \mer
    $$\pic
    \SetUnits[cm] (0.8,0.8,0.8)
    \SolidVertex{gray}
    \Path(0,2) (0,0) (1,1) (2,0) (2,2)
    \Align[c] ($x$) (1,1.4) (5.7,1.3)
    \Align[c] ($G_1$) (1,-0.75)
    \Translate(0,0) (5,0)
    \Path(0,0) (1,1) (2,2)
    \Vertex(0,2) (2,0)
    \Align[c] ($G_2$) (1,-0.75)
    \cip$$
    \caption{Graphs with a unique vertex-ISS $x$}
    \label{fig:vertex-iss-ex}
\end{figure}
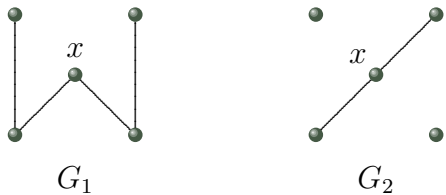
\begin{lemma}\label{lem:delta-Delta}
Let $G$ be a graph with minimum degree $\delta$ and maximum degree $\Delta$. Suppose that every vertex of $G$ is a vertex-ISS. Then each vertex of degree $\delta$ is adjacent to each vertex of degree $\Delta$.
\end{lemma}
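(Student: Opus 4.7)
The plan is to exploit the fact that if $v$ is a vertex-ISS then $v(G)\cong G$, so the single-vertex Seidel switch at $v$ preserves the multiset of vertex degrees; in particular the maximum degree of $v(G)$ is still $\Delta$. I would first record the elementary degree-change rule for switching at a single vertex: applying $v$ turns the degree of $v$ itself into $n-1-\deg_G(v)$, while for every other vertex $x$ the new degree is $\deg_G(x)-1$ if $x\in N_G(v)$ and $\deg_G(x)+1$ otherwise. This is immediate from the definition of $v(G)$ or from Theorem~\ref{thm:subset-switch} applied to $S=\{v\}$.

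With this rule in hand the main argument is a one-line contradiction. Fix an arbitrary vertex $v$ of degree $\delta$ and an arbitrary vertex $w$ of degree $\Delta$, and assume for the moment that $\delta<\Delta$, so that $v\neq w$ automatically. If $w$ were not adjacent to $v$ in $G$, then by the degree-change rule the degree of $w$ in $v(G)$ would be $\Delta+1$, exceeding the maximum degree $\Delta$ of $v(G)$; contradiction. Hence $vw\in E(G)$. Letting $v$ and $w$ range over all vertices of degree $\delta$ and $\Delta$ respectively yields the lemma whenever $\delta<\Delta$.

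The only remaining case is $\delta=\Delta$, where $G$ is $k$-regular and the stated conclusion reduces to the claim that $G$ is complete. I would dispose of this by inspecting the degree sequence of $v(G)$ directly: in a $k$-regular graph the switch at any vertex $v$ produces the degrees $n-1-k$ at $v$, $k-1$ at each neighbor of $v$, and $k+1$ at each non-neighbor, and for $v(G)$ to remain $k$-regular (as required by $v(G)\cong G$) one is forced into the trivial situation $n=1$. Hence the hypothesis of the lemma is satisfied in the regular case only vacuously, and the conclusion holds there trivially.

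I do not anticipate any real obstacle in this argument: the whole proof collapses to the degree-change formula and a single comparison with the maximum degree. The only point that requires any attention is making sure the boundary case $\delta=\Delta$ is separately accounted for, since there the roles of $v$ and $w$ can in principle coincide; this is handled by the short degree-sequence check sketched above.
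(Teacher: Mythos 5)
Your proof is correct and rests on exactly the same core argument as the paper: if a vertex $w$ of degree $\Delta$ is non-adjacent to the switching vertex $v$, then $\deg_{v(G)}(w)=\Delta+1$ exceeds the maximum degree of $v(G)$, contradicting $v(G)\cong G$. Your separate disposal of the regular case $\delta=\Delta$ (showing the hypothesis then forces $n=1$) is sound and in fact slightly more careful than the paper, which runs the same contradiction for any pair of vertices of degrees $\delta$ and $\Delta$ without distinguishing cases, tacitly assuming the two vertices are distinct.
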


\begin{proof}
Let $x$ be a vertex with $\deg(x)=\delta$ and let $y$ be a vertex with $\deg(y)=\Delta$. Suppose, for a contradiction, that $xy\notin E(G)$. In the switched graph $x(G)$, the degree of $y$ increases by $1$, because $y$ becomes adjacent to $x$. Hence $\deg_{x(G)}(y)=\Delta+1$, so $x(G)\not\cong G$. This contradicts the assumption that $x$ is a vertex-ISS.
\end{proof}

\section{Edge-identity switches}
\label{sec:edge-iss}

We now turn to identity switches of size $2$, which we call \emph{edge-ISS}. Let $e=xy$ be an edge of $G$. We say that $e$ is an edge-ISS if $\{x,y\}$ is an ISS of $G$.

\begin{example}
Construct a graph $G$ from a path of length $1$, with vertices $x,y$ and a set $A=\{v_1,\dots,v_p\}$ of $p$ additional vertices forming a complete graph. Add exactly $p$ edges of the form $xv_i$ or $yv_j$ to form a new graph. Then one checks that the edge $xy$ is an edge-ISS, while $A$ is a complete identity switch (a $K_p$-ISS) of the resulting graph.
\end{example}

We now give a more precise characterization of certain edge-ISS.  The proof of the next lemma is straightforward and is omitted.

\begin{lemma}
Let $e=xy$ be an edge-ISS of a graph $G$ and let $H$ be the subgraph of $G$ induced by $V(G)\setminus \{x,y\}$.  Then $\{N_G(x)\setminus\{y\}, N_G(y)\setminus\{x\}\}$ is a decomposition of $V(H)$.
\end{lemma}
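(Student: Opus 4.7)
The plan is to unfold the hypothesis $\{x,y\}(G)\cong G$ using Theorem~\ref{thm:subset-switch} and then to read the decomposition of $V(H)$ off invariants preserved by the isomorphism.

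Applying Theorem~\ref{thm:subset-switch} with $S=\{x,y\}$, the graph $\{x,y\}(G)$ agrees with $G$ on the edge $xy$ and on the induced subgraph $H$, and differs only in that the bipartite edges between $\{x,y\}$ and $V(H)$ are complemented. I would then partition $V(H)$ into the four adjacency-classes
\[
A=(N_G(x)\cap V(H))\setminus N_G(y),\quad B=(N_G(y)\cap V(H))\setminus N_G(x),
\]
\[
C=N_G(x)\cap N_G(y)\cap V(H),\quad D=V(H)\setminus(N_G(x)\cup N_G(y)),
\]
so that the lemma's conclusion---that $\{N_G(x)\setminus\{y\},N_G(y)\setminus\{x\}\}$ partitions $V(H)$---is equivalent to $C=D=\varnothing$.

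A first necessary condition comes from edge counting: the edge $xy$ and the edges inside $H$ contribute equally to $|E(G)|$ and to $|E(\{x,y\}(G))|$, while the number of crossing edges equals $|A|+|B|+2|C|$ in $G$ and $|A|+|B|+2|D|$ in $\{x,y\}(G)$. Isomorphism preserves the edge count, so this already forces $|C|=|D|$, in particular the two blocks have the same size.

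The main obstacle is upgrading $|C|=|D|$ to $C=D=\varnothing$. My plan here is to pick an isomorphism $\varphi:G\to\{x,y\}(G)$ and track the four blocks: the switch sends every $v\in C$ to a vertex adjacent to neither $x$ nor $y$ in $\{x,y\}(G)$, and symmetrically $D\to C$, while reshuffling $A\cup B$ internally, so $\varphi$ should carry $C$ bijectively to $D$ and preserve $H$-degrees along the way. Combined with the facts that $H$ is the common induced subgraph and that $xy$ remains an edge in both graphs, this matching is strongly constrained and one hopes to force $C=D=\varnothing$. The subtle point is that a priori $\varphi$ need not fix $\{x,y\}$ or $V(H)$ setwise, so the argument must either produce such a ``rigid'' $\varphi$ (for instance by post-composing with an automorphism of $\{x,y\}(G)$ that stabilises $\{x,y\}$) or else replace the degree argument with a finer joint invariant---recording $H$-degree together with the adjacency pattern to $\{x,y\}$---strong enough to rule out any nontrivial pairing of $C$ with $D$.
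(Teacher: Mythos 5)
Your reduction is correct as far as it goes: with $A,B,C,D$ as you define them, the lemma is equivalent to $C=D=\varnothing$, and your edge count does prove $|C|=|D|$ (note that this identity is already equivalent, given $xy\in E(G)$, to $\deg_G(x)+\deg_G(y)=n$, i.e.\ to condition (i) of Theorem~\ref{thm:edge-iss}). But the step you flag as ``the main obstacle''---upgrading $|C|=|D|$ to $C=D=\varnothing$---is not merely hard; it is impossible, because the lemma as stated is false. Take $G$ to be the paw graph: $V(G)=\{x,y,w,z\}$, $E(G)=\{xy,\,xw,\,yw,\,wz\}$ (a triangle $xyw$ with a pendant vertex $z$ attached to $w$). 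Switching at $S=\{x,y\}$ deletes $xw,yw$ and adds $xz,yz$, producing the graph with edge set $\{xy,\,xz,\,yz,\,wz\}$, again a triangle with a pendant edge, hence isomorphic to $G$. So $e=xy$ is an edge-ISS. Yet $N_G(x)\setminus\{y\}=N_G(y)\setminus\{x\}=\{w\}$: the two sets coincide and fail to cover $z$, so under no reading of ``decomposition'' do they decompose $V(H)=\{w,z\}$. (The disconnected graph $K_3\cup K_1$, switching at an edge of the triangle, behaves the same way.) Here $C=\{w\}$, $D=\{z\}$, and the isomorphism fixing $x,y$ and swapping $w,z$ carries $C$ bijectively onto $D$, exactly as you predicted an isomorphism must---which shows that the ``finer joint invariant'' you hope to deploy cannot exist: every invariant you list ($H$-degrees, the edge $xy$, the adjacency pattern to $\{x,y\}$ up to the switch) is respected in this example while $C,D\neq\varnothing$.

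So the failure is not in your technique but in the statement. The paper itself offers no proof to compare against (it declares the lemma straightforward and omits the argument), and the example above explains why no straightforward proof exists. What \emph{is} true, and is exactly what your counting argument establishes, is the cardinality statement
\[
\bigl|N_G(x)\setminus\{y\}\bigr|+\bigl|N_G(y)\setminus\{x\}\bigr|=|V(H)|,
\]
equivalently $|C|=|D|$, equivalently condition (i) of Theorem~\ref{thm:edge-iss}; and indeed your derivation of it is cleaner than the degree-multiset argument used in the paper's proof of that theorem. The paw graph satisfies both conditions (i) and (ii) of Theorem~\ref{thm:edge-iss}, so it is consistent with that characterization while refuting the lemma; the lemma should accordingly be weakened to the displayed counting identity (or the hypothesis strengthened), rather than asserting a partition of $V(H)$.
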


\begin{theorem}\label{thm:edge-iss}
Let $G$ be a graph of order $n$ and let $e=xy\in E(G)$. Let $H=[V(G)\setminus\{x,y\}]$ be the subgraph induced by the vertices other than $x$ and $y$. Then $e$ is an edge-ISS of $G$ if and only if the following two conditions hold:
\begin{enumerate}[label=\textnormal{(\roman*)},leftmargin=2em]
  \item $|N_G(x)| + |N_G(y)| = n$;
  \item there exists an automorphism $\varphi$ of $H$ such that
  \[
  \varphi\bigl(N_G(x)\setminus\{y\}\bigr) = V(H)\setminus N_G(y),
  \]
  and hence also $\varphi\bigl(N_G(y)\setminus\{x\}\bigr)=V(H)\setminus N_G(x)$.
\end{enumerate}
\end{theorem}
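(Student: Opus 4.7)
The plan is to exploit the concrete description of $G' := \{x,y\}(G)$ provided by Theorem~\ref{thm:subset-switch}. Writing $A := N_G(x)\setminus\{y\}$ and $B := N_G(y)\setminus\{x\}$, the graph $G'$ retains the edge $xy$ and the induced subgraph $H$ on $V(H) := V(G)\setminus\{x,y\}$, while $x$ and $y$ acquire the new $V(H)$-neighborhoods $V(H)\setminus A$ and $V(H)\setminus B$ respectively. Conditions (i) and (ii) are then exactly what is needed to extend a suitable permutation of $V(H)$ to an isomorphism $G\to G'$ that interchanges $x$ and $y$.

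For the sufficiency direction ($\Leftarrow$), I would define $\psi:V(G)\to V(G)$ by $\psi(x)=y$, $\psi(y)=x$, and $\psi|_{V(H)}=\varphi$, and verify directly that $\psi$ is an isomorphism $G\to G'$. Bijectivity is immediate, and adjacency preservation breaks into four cases: the edge $xy$ maps to $yx$, which is still an edge in $G'$; adjacencies inside $V(H)$ are preserved because $\varphi$ is an automorphism of $H$; for an edge from $x$ to $v\in V(H)$ one has $xv\in E(G)\iff v\in A\iff \varphi(v)\in V(H)\setminus B\iff y\varphi(v)\in E(G')$, which is exactly condition (ii); the edges from $y$ are symmetric and use the companion identity $\varphi(B)=V(H)\setminus A$. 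Condition (i) enters as the cardinality match $|A|+|B|=|V(H)|$ that keeps the set identities in (ii) consistent.

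For the necessity direction ($\Rightarrow$), I would start from an isomorphism $\psi:G\to G'$ and argue, using the preceding lemma together with a degree computation, that one can take $\psi(x)=y$ and $\psi(y)=x$. The degree identity $|N_G(x)|=\deg_{G'}(y)=n-|N_G(y)|$ then yields (i). Setting $\varphi:=\psi|_{V(H)}$, I would observe that $\varphi$ is an automorphism of $H$ since the induced subgraphs on $V(H)$ agree in $G$ and $G'$, and compute $\psi(A)=N_{G'}(\psi(x))\setminus\{\psi(y)\}=N_{G'}(y)\setminus\{x\}=V(H)\setminus B$ to obtain (ii).

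The main obstacle I anticipate is the first step of the necessity direction: a given isomorphism $\psi:G\to G'$ need not \emph{a priori} preserve $\{x,y\}$ setwise. I expect to handle this by combining the preceding lemma on the decomposition of $V(H)$ with degree considerations, which should pin down $\{x,y\}$ as a distinguished pair of vertices. A secondary technicality, absorbed in the statement's phrase ``hence also'', is that for a fixed $\varphi$ the identity $\varphi(B)=V(H)\setminus A$ is not literally automatic from $\varphi(A)=V(H)\setminus B$; either restricting to involutive $\varphi$ or substituting $\varphi$ by $\varphi^{-1}$ where needed makes both identities simultaneously available for the sufficiency construction.
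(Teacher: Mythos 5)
You follow essentially the same route as the paper's own proof: describe $G'=\{x,y\}(G)$ via Theorem~\ref{thm:subset-switch}, and produce (for sufficiency) or restrict (for necessity) an isomorphism that swaps $x$ and $y$ and acts as an automorphism $\varphi$ on $V(H)$. In the sufficiency direction you are actually \emph{more} careful than the paper, whose proof checks only the pairs $xv$ with $v\in V(H)$ and never checks the pairs $yv$ --- exactly the place where the companion identity $\varphi(B)=V(H)\setminus A$ is needed, writing $A=N_G(x)\setminus\{y\}$ and $B=N_G(y)\setminus\{x\}$. Your observation that this identity is not a consequence of $\varphi(A)=V(H)\setminus B$ is correct: the latter is equivalent to $\varphi^{-1}(B)=V(H)\setminus A$, not to $\varphi(B)=V(H)\setminus A$. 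However, your proposed repairs do not work: you cannot use $\varphi$ on the $x$-edges and $\varphi^{-1}$ on the $y$-edges inside a single bijection; replacing $\varphi$ by $\varphi^{-1}$ throughout merely exchanges which of the two identities you possess; and restricting to involutive $\varphi$ changes the hypothesis (the necessity direction has no reason to produce an involution). The only viable reading is that condition (ii) asserts \emph{both} identities for the same $\varphi$ --- which is what the statement's ``hence also'' must be taken to mean --- and under that reading your four-case verification is complete.

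The obstacle you flag in the necessity direction is not a technicality that degree arguments can absorb: it is fatal. First, the unlabeled lemma you plan to invoke is false: in the paw (a triangle on $x,y,a$ with a pendant vertex $b$ attached to $a$), the edge $xy$ is an edge-ISS, yet $A=B=\{a\}$ is not a decomposition of $V(H)=\{a,b\}$. Second, and worse, an isomorphism $G\to G'$ cannot in general be normalized to swap $x$ and $y$, and the necessity direction of the theorem as stated is in fact false. Let $H$ be the path $v_1v_2v_3v_4$ and let $G$ consist of $H$, the edge $xy$, and the edges $xv_1,xv_2,yv_1,yv_3$; then $n=6$ and $\deg_G(x)=\deg_G(y)=3$, so (i) holds. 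In $G'=\{x,y\}(G)$ the vertex $x$ is joined to $v_3,v_4$ and $y$ to $v_2,v_4$, so the map fixing $x$ and $y$ and reversing the path is an isomorphism $G\to G'$; hence $xy$ is an edge-ISS. But $\mathrm{Aut}(H)$ contains only the identity and the reversal, and neither maps $A=\{v_1,v_2\}$ onto $V(H)\setminus N_G(y)=\{v_2,v_4\}$, so (ii) fails. Thus no proof of the stated equivalence can succeed; the paper's proof buries exactly this gap in the unjustified phrases ``we may assume that $\psi$ maps $V(H)$ onto itself'' and ``we may assume that $\psi(x)=y$ and $\psi(y)=x$.'' Note that (i) genuinely is necessary --- it follows unconditionally from $|E(G)|=|E(G')|$ --- so the defect is localized in (ii): a correct statement must at least admit the alternative in which the isomorphism fixes $x$ and $y$, i.e.\ some $\varphi\in\mathrm{Aut}(H)$ with $\varphi(A)=V(H)\setminus A$ and $\varphi(B)=V(H)\setminus B$, and one must still analyze isomorphisms that move the pair $\{x,y\}$ into $V(H)$.
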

\rem
\begin{proof} [Proof sketch]
Switching with respect to $\{x,y\}$ toggles adjacency between $\{x,y\}$ and $V(G)\setminus\{x,y\}$, while leaving the induced subgraph on $V(G)\setminus\{x,y\}$ unchanged. Condition (i) expresses the requirement that the degrees of $x$ and $y$ are preserved (up to permutation) under the switch. Condition (ii) expresses that, within the induced subgraph $H$, the neighbor sets of $x$ and $y$ can be interchanged in the complement by an automorphism, so that the switched graph is isomorphic to $G$. A careful case-by-case analysis on adjacency shows that these conditions are necessary and sufficient.
\end{proof}
\mer
\rem
\begin{theorem}\label{thm:edgeISS}
Let $G$ be a graph of order $n$ and let $e=xy\in E(G)$.  
Let
\[
H=G\big[V(G)\setminus\{x,y\}\big].
\]
Then $e$ is an edge--identity Seidel switch (edge--ISS) of $G$, i.e.,
$\{x,y\}(G)\cong G$, if and only if the following hold:
\begin{enumerate}[(i)]
\item $\deg_G(x)+\deg_G(y)=n$;
\item there exists an automorphism $\varphi\in\mathrm{Aut}(H)$ such that
\[
\varphi\big(N_G(x)\setminus\{y\}\big)
   =V(H)\setminus N_G(y).
\]
\end{enumerate}
\end{theorem}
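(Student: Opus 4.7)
The plan is to prove both directions by constructing, respectively extracting, an isomorphism $\psi\colon G'\to G$ that swaps $x$ and $y$, where $G':=\{x,y\}(G)$. I abbreviate $A:=N_G(x)\setminus\{y\}$ and $B:=N_G(y)\setminus\{x\}$. By Theorem~\ref{thm:subset-switch}, $G'$ retains the edge $xy$, agrees with $G$ on the induced subgraph $H$, and satisfies $N_{G'}(x)\cap V(H)=V(H)\setminus A$ and $N_{G'}(y)\cap V(H)=V(H)\setminus B$. For sufficiency, assuming (i) and (ii), I would define $\psi$ by $\psi(x)=y$, $\psi(y)=x$, and $\psi|_{V(H)}=\varphi$, and verify that $\psi$ is an isomorphism by a three-case adjacency check: the edge $xy$ is preserved; a cross-pair $xv$ with $v\in V(H)$ is an edge of $G'$ iff $v\notin A$, while $\psi(x)\psi(v)=y\varphi(v)\in E(G)$ iff $\varphi(v)\in B$, and these are equivalent via condition (ii) together with the bijectivity of $\varphi$ on $V(H)$; the analogous check for $yv$ uses the companion identity $\varphi(B)=V(H)\setminus A$; and edges within $V(H)$ are preserved because $\varphi\in\mathrm{Aut}(H)$.

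For necessity, I would first derive (i) from an edge count. Using Theorem~\ref{thm:subset-switch}, the number of edges changes by
\[
|E(G')|-|E(G)|=\bigl(2(n-2)-|A|-|B|\bigr)-\bigl(|A|+|B|\bigr)=2\bigl(n-2-|A|-|B|\bigr),
\]
so the hypothesis $G\cong G'$ forces $|A|+|B|=n-2$, equivalently $\deg_G(x)+\deg_G(y)=n$. For (ii), I would fix an isomorphism $\psi\colon G'\to G$ and, after the reduction described below, assume $\psi(x)=y$ and $\psi(y)=x$. Setting $\varphi:=\psi|_{V(H)}$, the invariance of the induced subgraph on $V(H)$ under the switch makes $\varphi$ an automorphism of $H$, and reading off adjacency on cross-pairs $xv$ yields $\varphi(V(H)\setminus A)=B$, i.e.\ $\varphi(A)=V(H)\setminus B$; the $y$-side analysis gives the symmetric identity $\varphi(B)=V(H)\setminus A$.

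The main obstacle is justifying that the isomorphism $\psi$ can be taken with $\psi(x)=y$ and $\psi(y)=x$. A priori, $\psi$ only carries the preserved edge $xy\in E(G')$ to some edge of $G$; its endpoints have degrees $\deg_{G'}(x)=\deg_G(y)$ and $\deg_{G'}(y)=\deg_G(x)$ by (i), so the image edge has matching degree-multiset but need not equal $\{x,y\}$. I plan to reduce to the swap configuration by post-composing $\psi$ with a suitable automorphism of $G$ that returns the image edge onto $\{x,y\}$, and then to rule out the fixed orientation $\psi(x)=x$, $\psi(y)=y$ using the involutive nature of the switch $\{x,y\}(\cdot)$ together with the degree swap at $\{x,y\}$ forced by (i); this step will be the most technical part of the argument, and I expect its careful execution to require a case split on the orbit structure of $\mathrm{Aut}(G)$ acting on the pair $\{x,y\}$.
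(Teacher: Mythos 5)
You follow the same route as the paper's own proof: build the swap isomorphism $\psi$ (with $\psi(x)=y$, $\psi(y)=x$, $\psi|_{V(H)}=\varphi$) for sufficiency, and extract $\varphi$ from a suitably normalized isomorphism for necessity. Your edge-count derivation of (i) is correct, and in fact cleaner than the paper's degree-multiset argument (which overlooks that the switch also changes the degrees of vertices inside $V(H)$). However, both halves of your proposal contain genuine gaps, and the first one cannot be fixed.

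\textbf{Sufficiency.} Write $A=N_G(x)\setminus\{y\}$ and $B=N_G(y)\setminus\{x\}$. Your $x$-side check is fine: (ii) plus bijectivity of $\varphi$ gives $\varphi\bigl(V(H)\setminus A\bigr)=B$. But the $y$-side check needs the \emph{companion identity} $\varphi(B)=V(H)\setminus A$, and this does not follow from (ii): what follows is $\varphi^{-1}(B)=V(H)\setminus A$, which coincides with the companion identity only if $\varphi^{2}$ fixes $A$ setwise (e.g.\ if $\varphi$ is an involution). This is not a presentational slip --- the ``if'' direction of the theorem as stated is false. Take $H=C_6$ with edges $12,23,34,45,56,61$, and let $A=\{1,2,4\}$, $B=\{1,2,5\}$, so $n=8$, $\deg_G(x)=\deg_G(y)=4$, and (i) holds; the rotation $\varphi\colon i\mapsto i+2 \pmod 6$ is an automorphism of $H$ with $\varphi(A)=\{3,4,6\}=V(H)\setminus B$, so (ii) holds. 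Yet in $G$ the degree-$4$ vertices are $x,y,1,2$ and induce $K_4$, while in $G'=\{x,y\}(G)$ the degree-$4$ vertices are $x,y,3,6$ and induce $K_4$ minus an edge ($3$ and $6$ are non-adjacent on the cycle); since any isomorphism maps degree-$4$ vertices onto degree-$4$ vertices, $G'\not\cong G$. The paper's own proof has exactly the same hole --- it simply never checks the pairs $yv$ --- so the statement itself needs repair: (ii) must demand a single $\varphi$ satisfying both $\varphi(A)=V(H)\setminus B$ and $\varphi(B)=V(H)\setminus A$, and with that strengthening your three-case verification closes.

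\textbf{Necessity.} You rightly identify the normalization of $\psi$ (that it can be taken to swap $x,y$ and preserve $V(H)$) as the main obstacle; the paper hides this behind two ``we may assume'' clauses and never proves it. But your repair plan does not close it: post-composing with an automorphism of $G$ that carries the edge $\psi(x)\psi(y)$ back onto $xy$ presupposes that these two edges lie in the same $\mathrm{Aut}(G)$-orbit, which is precisely the kind of fact that needs proof rather than an available tool; and the ``fixed orientation'' $\psi(x)=x$, $\psi(y)=y$ cannot be ruled out in general --- when $\deg_G(x)=\deg_G(y)=n/2$ nothing forbids it, and such a $\psi$ yields $\psi(A)=V(H)\setminus A$ and $\psi(B)=V(H)\setminus B$, which is not condition (ii). So this direction remains unproven in your write-up at exactly the point where it is unproven in the paper.
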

\mer
\begin{proof}
Let $S=\{x,y\}$ and denote by $G'=\{x,y\}(G)$ the Seidel switching of $G$
with respect to $S$.
By definition of Seidel switching, adjacencies inside $S$ and inside
$V(G)\setminus S$ are preserved, while adjacencies between $S$ and
$V(G)\setminus S$ are complemented.

In particular,
\[
G'[V(H)]=H \qquad\text{and}\qquad xy\in E(G') .
\]
For every $v\in V(H)$ we have
\[
xv\in E(G') \iff xv\notin E(G),
\qquad
yv\in E(G') \iff yv\notin E(G).
\]
Hence
\[
N_{G'}(x)\setminus\{y\}
   =V(H)\setminus\big(N_G(x)\setminus\{y\}\big),
\]
\[
N_{G'}(y)\setminus\{x\}
   =V(H)\setminus\big(N_G(y)\setminus\{x\}\big).
\]
It follows that
\[
\deg_{G'}(x)=n-\deg_G(x),
\qquad
\deg_{G'}(y)=n-\deg_G(y).
\]

\smallskip
\noindent\textbf{($\Rightarrow$)}
Assume that $e=xy$ is an edge--ISS of $G$, so that $G'\cong G$.
Since isomorphic graphs have the same degree multiset, we must have
\[
\{\deg_G(x),\deg_G(y)\}
   =\{\deg_{G'}(x),\deg_{G'}(y)\}
   =\{n-\deg_G(x),\,n-\deg_G(y)\}.
\]
This implies
\[
\deg_G(x)+\deg_G(y)=n,
\]
which proves (i).

Let $\psi:V(G)\to V(G')$ be an isomorphism.
Since $H$ is an induced subgraph of both $G$ and $G'$, and is unchanged
by the switching, we may assume that $\psi$ maps $V(H)$ onto itself.
Set
\[
\varphi=\psi|_{V(H)} .
\]
Then $\varphi\in\mathrm{Aut}(H)$.
Moreover, since $xy\in E(G)\cap E(G')$, we may assume that
$\psi(x)=y$ and $\psi(y)=x$.

Let $v\in V(H)$. Then
\[
xv\in E(G)
   \iff y\varphi(v)\in E(G').
\]
Using the definition of switching,
\[
y\varphi(v)\in E(G')
   \iff y\varphi(v)\notin E(G)
   \iff \varphi(v)\notin N_G(y).
\]
Thus
\[
v\in N_G(x)\setminus\{y\}
   \iff \varphi(v)\in V(H)\setminus N_G(y),
\]
which yields
\[
\varphi\big(N_G(x)\setminus\{y\}\big)
   =V(H)\setminus N_G(y).
\]
This proves (ii).

\smallskip
\noindent\textbf{($\Leftarrow$)}
Conversely, assume that (i) and (ii) hold.
Let $\varphi\in\mathrm{Aut}(H)$ satisfy
\[
\varphi\big(N_G(x)\setminus\{y\}\big)
   =V(H)\setminus N_G(y).
\]
Define a bijection $\psi:V(G)\to V(G')$ by
\[
\psi(x)=y,\qquad \psi(y)=x,\qquad \psi(v)=\varphi(v)\ \text{for }v\in V(H).
\]

We verify that $\psi$ preserves adjacency.
If $u,v\in V(H)$, then adjacency is preserved since $\varphi$ is an
automorphism of $H$.
The edge $xy$ is preserved because it is unchanged by the switching.
Finally, for $v\in V(H)$,
\[
xv\in E(G)
   \iff v\in N_G(x)\setminus\{y\}
   \iff \varphi(v)\in V(H)\setminus N_G(y)
   \iff y\varphi(v)\in E(G').
\]
Thus $\psi$ preserves all adjacencies, and hence is an isomorphism
from $G$ to $G'$.

Therefore $\{x,y\}(G)\cong G$, and $e$ is an edge--ISS of $G$.
\end{proof}

The condition $|N_G(x)| + |N_G(y)| = n$ is particularly restrictive. For instance, in the cube graph $Q_3$, each vertex has degree $3$ and $|V(Q_3)|=8$, so $|N(x)|+|N(y)|=6\neq 8$ for every edge $xy$. Hence $Q_3$ has no edge-ISS.

On the other hand, various graphs do admit many edge-ISS.

\begin{example}
It can be verified that every edge of the graph $C_3\times P_2$ that does not lie in a 3-cycle is an edge-ISS.  On the other hand, every edge of the complete bipartite graph $K_{m,n}$ is an edge-ISS.
\end{example}

The behaviour of edge-ISS is stable under certain modifications of the induced subgraph $H$.

\begin{corollary}\label{cor:edge-iss-complement}
Let $e=xy$ be an edge-ISS of a graph $G$, and let $H$ be the subgraph induced by $V(G)\setminus\{x,y\}$. Let $G'$ be the graph obtained from $G$ by replacing $H$ with its complement $\overline H$, while keeping the adjacencies involving $x$ and $y$ unchanged. Then $e$ is also an edge-ISS of $G'$.
\end{corollary}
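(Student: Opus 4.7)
The plan is to apply Theorem~\ref{thm:edgeISS} directly to $G'$, checking that conditions (i) and (ii) transfer from $G$ to $G'$ essentially for free. The construction of $G'$ only modifies edges inside $V(G) \setminus \{x,y\}$, so adjacencies involving $x$ or $y$ are unchanged. In particular,
\[
N_{G'}(x)=N_G(x), \qquad N_{G'}(y)=N_G(y).
\]

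\noindent Condition (i) is then immediate: $\deg_{G'}(x)+\deg_{G'}(y)=\deg_G(x)+\deg_G(y)=n$, where the last equality is the hypothesis that $e=xy$ is an edge-ISS of $G$ combined with Theorem~\ref{thm:edgeISS}(i).

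For condition (ii), the relevant induced subgraph of $G'$ on $V(G')\setminus\{x,y\}$ is $\overline{H}$. We need an automorphism $\varphi'\in\mathrm{Aut}(\overline{H})$ satisfying
\[
\varphi'\bigl(N_{G'}(x)\setminus\{y\}\bigr)=V(\overline{H})\setminus N_{G'}(y).
\]
Substituting the neighbor equalities above and $V(\overline{H})=V(H)$, this reduces to
\[
\varphi'\bigl(N_G(x)\setminus\{y\}\bigr)=V(H)\setminus N_G(y),
\]
which is exactly the statement provided by Theorem~\ref{thm:edgeISS}(ii) applied to $G$. Thus we can take $\varphi'=\varphi$, the automorphism furnished by the hypothesis. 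The one nontrivial remark is that $\varphi$ lies in $\mathrm{Aut}(\overline{H})$ as well as in $\mathrm{Aut}(H)$, because for any simple graph a bijection of the vertex set preserves adjacency if and only if it preserves non-adjacency; hence $\mathrm{Aut}(H)=\mathrm{Aut}(\overline{H})$ as subgroups of the symmetric group on $V(H)$.

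There is essentially no obstacle: the argument is a direct verification of the two conditions of Theorem~\ref{thm:edgeISS}, leveraging the two observations that the neighborhoods of $x,y$ are unaffected by the modification and that complementing a graph preserves its automorphism group. Applying Theorem~\ref{thm:edgeISS} in the reverse direction then yields $\{x,y\}(G')\cong G'$, i.e., $e$ is an edge-ISS of $G'$.
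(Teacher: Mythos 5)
Your proposal is correct and follows essentially the same route as the paper's proof: both verify that condition (i) of Theorem~\ref{thm:edge-iss} is untouched by complementing $H$ and that the automorphism witnessing condition (ii) for $G$ also works for $\overline H$, since $\mathrm{Aut}(H)=\mathrm{Aut}(\overline H)$ and the neighborhoods of $x$ and $y$ are unchanged. Your write-up simply makes explicit what the paper leaves as a brief remark.
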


\begin{proof}
The condition $|N_G(x)|+|N_G(y)|=n$ is unaffected by complementing $H$, and an automorphism of $H$ satisfying Theorem~\ref{thm:edge-iss}(ii) induces an automorphism of $\overline H$ with the analogous property. Thus $e$ remains an edge-ISS in $G'$.
\end{proof}

\begin{remark}
Let $e=xy$ be an edge of a graph $G$.  Then $e$ is an edge-IIS of $G$ if and only if $\{x, y\}$ is an ISS of the graph $G-e$.
\end{remark}

The next construction shows how certain larger subgraphs can appear as identity switches.

\begin{theorem}\label{thm:KmKn}
Let $A$ be either $K_m$ or $\overline{K_m}$, and let $B$ be either $K_n$ or $\overline{K_n}$. Assume that at least one of $m$ or $n$ is even. Form a graph $G$ by adding exactly $\frac{mn}{2}$ edges between $A$ and $B$, each with one endpoint in $A$ and one endpoint in $B$. Then $A$ and $B$ are identity Seidel switches of $G$.
\end{theorem}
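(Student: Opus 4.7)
The plan is first to halve the work via the complementary-switch identity. Since $V(G)=A\cup B$ is a disjoint union, $A$ and $B$ are complementary subsets of $V(G)$; by Theorem~\ref{thm:complement-switch} this yields $A(G)=B(G)$. Hence $A$ is an ISS if and only if $B$ is, and it suffices to prove that $A$ is an ISS of $G$.

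Without loss of generality I assume $m$ is even (otherwise swap the roles of $A$ and $B$). The natural construction I would use is to partition $A=A_1\sqcup A_2$ with $|A_1|=|A_2|=m/2$ and to take the cross edges of $G$ to be exactly $A_1\times B$, giving $|A_1|\cdot|B|=(m/2)n=mn/2$ edges as required. The crucial structural feature here is that, since $G[A]\in\{K_m,\overline{K_m}\}$ and $G[B]\in\{K_n,\overline{K_n}\}$, every permutation of $A$ is an automorphism of $G[A]$ and every permutation of $B$ is an automorphism of $G[B]$; this grants maximal flexibility in building an isomorphism.

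To show that $A$ is an ISS: by Theorem~\ref{thm:subset-switch}, switching at $S=A$ leaves $G[A]$ and $G[B]$ unchanged and complements all cross edges, so the cross edges of $A(G)$ are exactly $A_2\times B$. I then take $\sigma\colon V(G)\to V(G)$ to be any bijection interchanging $A_1$ with $A_2$ and fixing $B$ pointwise. This $\sigma$ restricts to an automorphism of $G[A]$ (since $G[A]$ is complete or empty), trivially preserves $G[B]$, and sends the cross edges $A_1\times B$ of $G$ onto the cross edges $A_2\times B$ of $A(G)$. Hence $\sigma\colon G\to A(G)$ is an isomorphism and $A$ is an ISS; combined with the first paragraph, $B$ is an ISS as well.

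The step I expect to be the main obstacle is choosing the cross-edge configuration so that a single permutation $\sigma$ witnesses the desired isomorphism. The half-bipartite-block construction above is arranged precisely so that complementing the cross edges has the same effect as swapping $A_1$ with $A_2$. For other admissible patterns of $mn/2$ cross edges (for instance, biregular bipartite configurations) the same overall strategy---reduce to $A$ being an ISS, exploit the full symmetry of $G[A]$ and $G[B]$, and build an explicit permutation that carries the cross edges onto their bipartite complement---applies, but the permutation itself must be tailored to the specific cross pattern.
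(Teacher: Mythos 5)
Your proof is correct and takes essentially the same route as the paper's own (sketched) argument: both adopt the constructive reading of the statement, pick a balanced distribution of the $\frac{mn}{2}$ cross edges (your block $A_1\times B$ with $|A_1|=m/2$), observe via Theorem~\ref{thm:subset-switch} that switching at $A$ merely complements the cross edges, and exhibit an explicit permutation (swapping $A_1$ with $A_2$, fixing $B$) as the isomorphism, with the reduction $A(G)=B(G)$ supplied by Theorem~\ref{thm:complement-switch}; your write-up is in fact more rigorous than the paper's sketch. One caution: soften your closing remark that the same strategy \emph{applies} to other patterns of $\frac{mn}{2}$ cross edges, since under that universal reading the theorem is simply false---for example, with $A=B=\overline{K_4}$ and cross-neighborhoods of $a_1,a_2,a_3,a_4$ of sizes $4,2,1,1$, switching at $A$ changes the degree sequence from $(4,4,2,2,1,1,1,1)$ to $(3,3,3,3,2,2,0,0)$, so no tailored permutation can exist and the existential/constructive reading you and the paper both use is the only tenable one.
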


\begin{proof}[Proof sketch]
The condition that at least one of $m$ or $n$ is even guarantees that we can distribute the $\frac{mn}{2}$ edges between $A$ and $B$ in a balanced way. The resulting bipartite incidence between $A$ and $B$ is such that switching with respect to $A$ or with respect to $B$ simply complements the bipartite edges and yields a graph isomorphic to $G$. A more precise argument uses permutations of $A$ and $B$ induced by the bipartite incidence structure.
\end{proof}

\section{Concluding remarks and open problems}
\label{sec:conclusion}

In this paper we have initiated a systematic study of \emph{identity Seidel switches}, that is, subsets of the vertex set whose Seidel switch yields a graph isomorphic to the original one. We have shown that for a fixed graph $G$ the set of all identity Seidel switches forms a $2$--subgroup of the full switching group, and we have obtained several structural constraints and constructions, particularly for vertex-ISS and edge-ISS.

Many natural questions remain open. We conclude with a short list of problems.

\begin{problem}
Characterize graphs $G$ for which every vertex is a vertex-ISS. Lemma~\ref{lem:delta-Delta} gives a necessary condition in terms of the minimum and maximum degree; can this be strengthened to a complete characterization?
\end{problem}

\begin{problem}
Describe the possible orders of the ISS group $\mathcal H$ of a graph $G$ of order $n$. For each integer $k$ with $1\le k<n$, does there exist a graph $G$ with $|\mathcal H|=2^k$?
\end{problem}

\begin{problem}
Give a more explicit description of the interaction between the automorphism group of $G$ and its ISS group. In particular, when does every identity Seidel switch arise from an automorphism of $G$?
\end{problem}

\begin{problem}
Extend the notion of identity Seidel switches to signed graphs or weighted graphs, and investigate analogues of Theorem~\ref{thm:edge-iss} in those settings.
\end{problem}

We hope that the ideas presented here will stimulate further work on Seidel switching from this ``stabilizer'' perspective.


\end{document}